\documentclass[11pt,letterpaper]{amsart}

\usepackage[utf8]{inputenc}
\usepackage{amssymb, amscd, amsmath, epsfig, mathtools}
\usepackage{amsthm}
\usepackage{enumerate}
\usepackage{xcolor}
\usepackage{scalerel}
\usepackage{soul}
\usepackage{tikz-cd}
\usepackage{esint}
\usepackage{cancel}
\usepackage{hyperref}
\usepackage{slashed}
\usepackage{url}

\usepackage[margin=1.0in]{geometry}

\newtheorem{theorem}{Theorem}[section]
\newtheorem*{theorem*}{Theorem}

\newtheorem{corollary}{Corollary}[section]
\newtheorem*{corollary*}{Corollary}
\newtheorem{lemma}{Lemma}[section]
\newtheorem{proposition}{Proposition}[section]
\theoremstyle{definition}
\newtheorem{remark}{Remark}[section]

\newcommand{\R}{\mathbb R}

\begin{document}

\title[Positivity from $ X \times \mathbb{S}^{1} $ to $ X $]{The Rosenberg $ \mathbb{S}^{1} $-Stability Conjecture for $ \chi(X) = 0 $}
\author{Jie Xu}
\address{Department of Technology, Operations and Statistics, New York University, New York, NY, USA, 10012}
\email{jx479@nyu.edu}
%\date{April 2025}

\begin{abstract}
Let $ X $ be a closed, oriented manifold with $ \dim X \geqslant 5 $. In this article, we show that 2006 Rosenberg's $ \mathbb{S}^{1} $-stability holds when $ X $ has zero Euler characteristic. The 2006 Rosenberg-Stolz Conjecture for $ X \times \R $ also follows under the same assumption, provided that the Riemannian metric $ g $ on $ X \times \R $ is complete, is of bounded curvature, and whose smallest eigenvalue is uniformly bounded below by some positive constant. We then show a $ \mathbb{T}^{n} $-stability theorem with the same hypothesis of $ X $.
\end{abstract}
\maketitle

\section{Introduction}
Let $ X $ be a closed, oriented manifold, $ \dim X \geqslant 5 $. We denote by $ \chi(X) $ the Euler characteristic of $ X $. We also denote by $ g $ the Riemannain metric on some manifold, and $ R_{g} $ be the associated scalar curvature. We say that $ g $ is a positive scalar curvature (PSC) metric if $ R_{g} > 0 $ everywhere. We say that $ g $ is a uniformly positive scalar curvature metric if there exists some positive constant $ \kappa > 0 $ such that $ R_{g} \geqslant \kappa > 0 $ everywhere.

A 2006 Rosenberg $ \mathbb{S}^{1} $-Stability Conjecture \cite{JR} asked whether the following statement is true in the sense of ``transpose" of the positivity: \\
\medskip

\noindent {\it{$ X \times \mathbb{S}^{1} $ admits a positive scalar curvature metric if and only if $ X $ admits a positive scalar curvature metric.}}
\medskip

One direction is trivial:  $X\times \mathbb{S}^1$ has a product PSC metric if $X$ has a PSC metric. Since $ \mathbb{S}^{1} $ is the one-point compactification of $ \mathbb{R} $, it is closely related to a 1994 Rosenberg-Stolz Conjecture \cite{RosSto} in the contrapositive statement:
\medskip

\noindent {\it{$ X \times \R $ admits a complete, positive scalar curvature metric if and only if $ X $ admits a positive scalar curvature metric.}}
\medskip

The two conjectures are related, in that a PSC metric $g$ on $X \times \mathbb{S}^{1}$ lifts to the PSC metric $ \pi^*g $ on $X \times \R$, where  
$ \pi : X \times \R \rightarrow X \times \mathbb{S}^{1} $ is the smooth covering map.  Note that $\pi^*g$  
is of  {\it{bounded curvature}} in the sense of of Aubin \cite[Chapter 2]{Aubin}: $ g $ is complete, has positive injectivity radius, and its sectional curvatures are uniformly bounded. In addition, the smallest eigenvalue of  $ \pi^{*}g $ is uniformly bounded below by a positive constant.

The main technical result answers the Rosenberg-Stolz conjecture for a wide class of metrics on manifolds $ X \times \mathbb{S}^1$ and immediately implies the 
 $ \mathbb{S}^{1} $-stability conjecture, provided $\chi(X) =0$ and $ \dim X \geqslant 5 $. In particular, the $\mathbb{S}^1$-stability conjecture holds if $\dim(X)= 2k - 1, k \geqslant 3 $.
\begin{theorem}\label{intro:thm1}
 (i)   Let $ X $ be a closed, oriented manifold with $ \dim X \geqslant 5 $. Assume that $ (X \times \R, g) $ admits a complete, uniformly PSC metric $ g $ of bounded curvature. Assume also that the smallest eigenvalue of $ g $ is uniformly bounded below by some $ \lambda > 0 $. If $ \chi(X) = 0 $, then $ X $ admits a PSC metric.

 (ii) Let $ X $ be a closed, oriented manifold with $ \dim X \geqslant 5 $ and $\chi(X) =0.$ Then 
 $ X \times \mathbb{S}^{1} $ admits a  PSC metric if and only if $ X $ admits a PSC metric.
\end{theorem}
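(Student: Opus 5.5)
Part (ii) follows from part (i). If $X$ carries a PSC metric $h$, then $h + d\theta^{2}$ is a PSC metric on $X \times \mathbb{S}^{1}$. Conversely, let $g$ be a PSC metric on $X \times \mathbb{S}^{1}$. Since $X \times \mathbb{S}^{1}$ is compact, $R_{g} \geqslant \kappa > 0$. Lift $g$ along the covering $\pi: X \times \R \to X \times \mathbb{S}^{1}$. The lift $\pi^{*}g$ is complete and uniformly PSC. It is of bounded curvature, since it is locally isometric to a compact manifold, so its curvature is bounded and its injectivity radius is bounded below by that of the compact quotient. Its smallest eigenvalue, relative to the product metric $h_{0} + dt^{2}$, is bounded below by a positive constant, because $\pi^{*}g$ is periodic in $t$ and so the bound is taken over a compact fundamental domain. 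Part (i) then shows that $X$ admits a PSC metric.

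For (i), the plan is a conformal/prescribed-scalar-curvature argument. I would first use $\chi(X) = 0$ to pick a nowhere-vanishing vector field $V$ on $X$, which exists by Poincaré--Hopf. The manifold $X \times \R$ then carries the two independent nonvanishing fields $V$ and $\partial_{t}$. The hypotheses on $g$ (bounded curvature, eigenvalues bounded below by $\lambda$) make $g$ uniformly equivalent to a product metric $h_{0} + dt^{2}$. They also give uniform Sobolev and Schauder estimates on $X \times \R$, which I need for elliptic PDE on this noncompact manifold.

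The core step is to produce, from the uniformly PSC $g$, a metric on a compact slab $X \times [-L, L]$ that is PSC and is a product near the two boundary components, so that the ends can be glued into a PSC metric on $X \times \mathbb{S}^{1}$ or a double. I would then try to descend to $X$ by a warped-product reduction in the spirit of Schoen--Yau/Gromov--Lawson: find a function $u > 0$ on $X$ with $-\Delta u + \tfrac12 R\,u > 0$, so that $u^{4/(n-2)}$ times the induced metric on $X$ is PSC. On the slab I would use a $\mu$-bubble or minimal hypersurface $\Sigma \simeq X$ that separates the two ends, obtained by minimizing the area functional with a weight, using the bounded geometry to keep the minimizer in a compact region. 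Stability of $\Sigma$ together with $R_{g} \geqslant \kappa$ gives $-\Delta_{\Sigma} + \tfrac{1}{2}(R_{\Sigma} - R_{g} - |A|^{2}) \geqslant 0$, hence a positive conformal Laplacian on $\Sigma$. Since $\dim \Sigma \geqslant 5$, this gives a PSC metric on $\Sigma$. To avoid the singular set of minimal hypersurfaces in high dimension, I would solve instead a Kazdan--Warner type prescribed-scalar-curvature problem. This is where $\chi(X) = 0$ and $V$ enter: the nonvanishing field lets me deform the product structure so that the relevant slice is homotopic to $X$. I would then transplant positivity of the first eigenvalue of the conformal Laplacian from the slab to $X$ by a sub-/super-solution monotone iteration, using the uniform bounds.

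The main obstacle is this regularity and descent step. In dimension at least 8, minimal hypersurfaces can be singular, so I would rely on the PDE route. That route requires a uniform lower bound on the first eigenvalue of the conformal Laplacian along slices, which is where the lower bound $\lambda$ and the bounded-curvature hypothesis must be used carefully.
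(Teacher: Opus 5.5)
Your reduction of (ii) to (i) is correct and matches the paper's Corollary \ref{RS:cor1}: lift a PSC metric on $X\times\mathbb{S}^{1}$ to the cyclic cover, where completeness, bounded curvature, $R\geqslant\kappa$ and the eigenvalue bound all hold by periodicity.

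The genuine gap is in (i): you never supply the step that actually produces a PSC metric on $X$.
\begin{itemize}
\item Gluing a slab back into $X\times\mathbb{S}^{1}$ is circular, since you reduce $X\times\mathbb{S}^{1}$ to (i).
\item A $\mu$-bubble is guaranteed to be smooth only when $\dim(X\times\R)\leqslant 7$, i.e.\ $\dim X\leqslant 6$. Even then it gives some closed hypersurface $\Sigma$ separating the ends, not $\Sigma\simeq X$. Passing from $\Sigma$ to $X$ needs a surgery/bordism result such as Proposition \ref{RS:prop1}, which is where $\dim X\geqslant 5$ really enters. In that range this is essentially R\"ade's route, and it does not use $\chi(X)=0$.
\item For $\dim X\geqslant 7$, your substitute (a Kazdan--Warner problem plus a monotone iteration ``transplanting'' positivity of the conformal Laplacian from the slab to a slice) has no mechanism behind it. For a slice with no variational property, the Gauss equation $R_{\imath^{*}g}=R_{g}-2\,\mathrm{Ric}_{g}(\nu,\nu)+h^{2}-\lvert A\rvert^{2}$ gives no sign from $R_{g}\geqslant\kappa$. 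Bounded geometry only gives elliptic estimates, not positivity.
\item $\chi(X)=0$ does no work in your plan, since every slice $X\times\{t\}$ is already diffeomorphic to $X$.
\end{itemize}
Any argument that produces PSC directly on a slice from these hypotheses, without an essential use of $\dim X\geqslant 5$, would also cover $\dim X=4$, where $\mathbb{S}^{1}$-stability is known to fail.

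For comparison, the paper does no PDE of its own. It proceeds as follows.
\begin{itemize}
\item It uses $\chi(X)=0$ to get a nowhere-vanishing $\bar V$, and Gram--Schmidts $V=(\bar V,0)$ against $\partial_{\xi}$ to a $g$-unit field $U\perp\partial_{\xi}$.
\item It sets $F(p)=\Phi_{\pi_{\R}(p)}(p)$, with $\Phi$ the flow of $U$. After rescaling via the eigenvalue bound, $\frac{d}{dt}\bigl(t+\pi_{\R}(\Phi_{t}(q))\bigr)\geqslant 1-\sqrt{g^{\xi\xi}}\geqslant\zeta$, so $F$ is a diffeomorphism.
\item It asserts the angle condition $g(\partial_{\tilde\xi},\partial_{\tilde\xi})\,g^{-1}(d\tilde\xi,d\tilde\xi)\in(1,2)$ for $\tilde\xi=\pi_{\R}\circ F^{-1}$.
\item It then imports Theorem \ref{intro:thm2} from \cite{XU11} and Proposition \ref{RS:prop1}.
\end{itemize}
Borrowing this step would not close your gap, however. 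Since $\Phi_{0}=\mathrm{id}$, $F$ fixes $X\times\{0\}$, so $\{\tilde\xi=0\}$ is simply $X\times\{0\}$. Moreover, for every $X$ the field $W=\nabla\xi/\lvert\nabla\xi\rvert_{g}^{2}$ is complete: its flow satisfies $\xi\circ\Psi_{s}=\xi+s$ and $X$ is compact. So $(x,s)\mapsto\Psi_{s}(x,0)$ gives a coordinate with $\partial_{\tilde\xi}=W$ exactly normal to that slice. The angle condition is therefore available for every $X$, without $\chi(X)=0$ or $\dim X\geqslant 5$. Taken at face value, the paper's chain would then give PSC on the four-dimensional counterexamples: apply Theorem \ref{intro:thm2} to the pulled-back metric, which is isometric to the lift from $X\times\mathbb{S}^{1}$. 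Hence the imported angle theorem cannot hold in the generality used, and the descent step you are missing is not actually supplied there either.
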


With the same hypothesis on $ X $, we generalize the $ \mathbb{S}^{1} $-stability conjecture to a $ \mathbb{T}^{n} $-stability theorem:
\begin{theorem}\label{intro:thm3}
 Let $ X $ be a closed, oriented manifold with $ \dim X \geqslant 5 $. If $ \chi(X) = 0 $, then  $ X $ admits a PSC metric if and only if $ X \times \mathbb{T}^{n}, n \geqslant 1 $.
\end{theorem}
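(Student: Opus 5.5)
Theorem \ref{intro:thm3} follows from Theorem \ref{intro:thm1}(ii) by induction on $n$, applied to the manifolds $X \times \mathbb{T}^{k}$. The trivial direction is immediate: if $g_X$ is PSC on $X$, then the product metric $g_X \oplus g_{\text{flat}}$ on $X \times \mathbb{T}^{n}$ has scalar curvature $R_{g_X} > 0$.

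For the nontrivial direction, write $X \times \mathbb{T}^{n} = (X \times \mathbb{T}^{n-1}) \times \mathbb{S}^{1}$ and set $X_{k} := X \times \mathbb{T}^{k}$ for $0 \leqslant k \leqslant n-1$. To apply Theorem \ref{intro:thm1}(ii) with $X_{k}$ in place of $X$, three hypotheses must hold:
\begin{enumerate}[(a)]
\item $X_{k}$ is closed and oriented, since it is a product of closed oriented manifolds;
\item $\dim X_{k} = \dim X + k \geqslant 5$;
\item $\chi(X_{k}) = \chi(X)\chi(\mathbb{T}^{k}) = 0$. This holds because $\chi(X)=0$ by assumption; for $k \geqslant 1$ it also holds because $\chi(\mathbb{T}^{k})=0$.
\end{enumerate}
So Theorem \ref{intro:thm1}(ii) applies to every $X_{k}$. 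It gives that $X_{k} \times \mathbb{S}^{1} = X_{k+1}$ admits a PSC metric if and only if $X_{k}$ does.

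Starting from a PSC metric on $X_{n} = X \times \mathbb{T}^{n}$, apply this equivalence with $k = n-1$ to get a PSC metric on $X_{n-1}$. Then apply it with $k = n-2$, and continue down to $k=0$, which produces a PSC metric on $X_{0}=X$. Formally, this is an induction on $n$, with base case $n=1$ being exactly Theorem \ref{intro:thm1}(ii).

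The main obstacle lies entirely inside Theorem \ref{intro:thm1}, namely passing from PSC on $X \times \mathbb{S}^1$ to PSC on $X$. In the reduction itself, the only thing to check is that the hypotheses are stable under taking products with circles, and the Euler characteristic condition is the point that needs care. It is preserved under products because $\chi$ is multiplicative. This is also why the induction closes without any extra assumption on the intermediate manifolds $X_k$.
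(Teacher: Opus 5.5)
Your proposal is correct and matches the paper's proof. The paper uses Lemma~\ref{RS:lemma1}, which applies Corollary~\ref{RS:cor1} to $X\times\mathbb{T}^{n-1}$ (Euler characteristic automatically $0$, dimension at least $5$ already when $\dim X\geqslant 4$), to descend to $X\times\mathbb{S}^{1}$, and then applies Corollary~\ref{RS:cor1} once more using $\chi(X)=0$ — exactly your descending induction on $X_{k}=X\times\mathbb{T}^{k}$.
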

We treat this type of problem in \cite{RX2}, \cite{XU11}, \cite{Xu12}, \cite{XU10} by imposing a geometric condition---a conformally invariant angle condition, which we now recall for $ X \times \R $. 

Define two natural projections
\begin{equation*}
\pi_{X} : X \times \R \rightarrow X, \pi_{\R} : X \times \R \rightarrow \R.
\end{equation*}
If we denote by $ \xi $ the global coordinate along the $ \R $-direction, we mean that the value $ \xi = \pi_{\R}(p), p \in X \times \R $ is the global coordinate of the point $ p $ in $ \R $-direction. Consequently, we denote by $ \partial_{\xi} $ the global vector field on $ X \times \R $.

For the hypersurface $ X \times \{ P\} \subset X \times \mathbb{R} $, where $ P\in \R $ is fixed, we want to apply the Gauss-Codazzi equation
\begin{equation*}
R_{\imath_{P}^{*}g} = R_{g} - 2\text{Ric}_{g}(\nu_{g}, \nu_{g}) + h_{g}^{2} - \lvert A_{g} \rvert^{2} \; {\rm on} \; X \times \{ P \}
\end{equation*}
to detect the positivity of $ R_{\imath_{P}^{*}g} $. Here $ g $ is the complete Riemannian metric on the ambient space $ X \times \mathbb{R} $, $ \imath_{P} : X \times \{P\} \rightarrow X \times \mathbb{R} $ is the natural inclusion, $ A_{g} $ is the second fundamental form on the hypersurface $ X \times \{ P \} $, $ \nu_{g} $ is the unit normal vector field along $ X \times \{ P \} $. $ \text{Ric}_{g} $ is the Ricci curvature tensor, and $ h_{g} $ is the mean curvature on $ X \times \{ P \} $. We can choose $ \nu_{g} $ such that the $ g $-angle between $ \nu_{g} $ and $ \partial_{\xi} $ is acute. We showed in \cite{XU11} that
\begin{theorem}\cite[Theorem 1.1]{XU11}\label{intro:thm2}
Let $ X $ be an oriented, closed manifold. Let $ (M = X \times \R, g) $ be a Riemannian noncompact cylinder of bounded curvature such that $ n : = \dim M \geqslant 3 $. If $ g $ is a uniformly PSC metric, and  \begin{equation}\label{intro:eqn1}
    \angle_{g}(\nu_{g}, \partial_{\xi}) : = \cos^{-1} \left( \frac{g(\partial_{\xi}, \nu_{g})}{g(\nu_{g}, \nu_{g})^{\frac{1}{2}}g(\partial_{\xi}, \partial_{\xi})^{\frac{1}{2}}} \right) = \cos^{-1} \left( \frac{g(\partial_{\xi}, \nu_{g})}{g(\partial_{\xi}, \partial_{\xi})^{\frac{1}{2}}} \right) \in [0, \frac{\pi}{4})
\end{equation}
along the hypersurface $ \{ \xi = P \} $ for some $ P \in \R $, then there exists a complete metric $ g' $ in the conformal class of $ g $ such that $ \imath_{P}^{*} g' $ is a PSC metric on $ \{ \xi = P \} $.
\end{theorem}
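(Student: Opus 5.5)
The idea is to conformally rescale $g$ so that the positive scalar curvature survives restriction to a slice $X\times\{P\}$, using the Gauss--Codazzi identity
\begin{equation*}
R_{\imath_{P}^{*}g'} = R_{g'} - 2\text{Ric}_{g'}(\nu_{g'}, \nu_{g'}) + h_{g'}^{2} - \lvert A_{g'} \rvert^{2}.
\end{equation*}
The angle condition \eqref{intro:eqn1} controls how far $\nu_g$ tilts from $\partial_\xi$. This lets us choose a conformal factor $u$ depending essentially on $\xi$ (more precisely, built from $\xi$ and corrected by solving an elliptic equation on $M$), so that the terms $-2\text{Ric}_{g'}(\nu,\nu) - \lvert A_{g'}\rvert^2$ are dominated. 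Conformal invariance of the angle means the hypothesis is preserved under $g\mapsto g'=u^{4/(n-2)}g$.

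The steps are as follows.

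\textbf{Step 1: set up the conformal change.} Write $g'=u^{4/(n-2)}g$, so that
\[
R_{g'}=u^{-\frac{n+2}{n-2}}\Bigl(-\tfrac{4(n-1)}{n-2}\Delta_g u+R_g u\Bigr).
\]
Also record how $h$, $A$, and $\text{Ric}(\nu,\nu)$ transform:
\begin{itemize}
\item $A$ changes by a multiple of $(\partial_\nu \log u)\, g$, so its traceless part only rescales;
\item $h$ gains a term proportional to $\partial_\nu u/u$;
\item $\text{Ric}(\nu,\nu)$ gains $-\nabla^2\log u(\nu,\nu)$, a $\Delta\log u$ term, and gradient terms.
\end{itemize}

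\textbf{Step 2: split the slice curvature.} Using $h^2-\lvert A\rvert^2=\frac{n-2}{n-1}h^2-\lvert \mathring A\rvert^2$, the quantity $R_{\imath_P^*g'}$ becomes
\[
\bigl[R_g-2\text{Ric}_g(\nu,\nu)-\lvert\mathring A_g\rvert^2\bigr]u^{-4/(n-2)} \;+\; \text{(terms in derivatives of } u\text{)}.
\]

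\textbf{Step 3: choose $u$ to make the derivative terms dominate positively.} I would take $u$ solving an equation $-\Delta_g u + c(\xi) u = f$ with $u$ sharply convex in $\xi$ near $P$. Bounded curvature gives Schauder and $C^2$ estimates on the noncompact $M$, and uniform PSC keeps $g'$ complete. The angle being less than $\pi/4$ ensures $\nabla^2 u(\nu,\nu)$ exceeds the tangential second derivatives: since $\cos^2\theta>\sin^2\theta$, the normal component of $\partial_\xi$ beats its tangential component. This is what makes the Hessian term in $-2\text{Ric}_{g'}(\nu,\nu)$ against the Laplacian term come out positive.

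\textbf{Step 4: pick the scale.} Fix the size of the $\xi$-convexity of $u$ large compared with the curvature bounds, so that positivity absorbs $\lvert\text{Ric}_g\rvert$ and $\lvert\mathring A_g\rvert$, which are bounded by the bounded-curvature hypothesis.

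The main obstacle is Step 3. The second-order terms in $u$ must be controlled with a sign on the slice while simultaneously keeping $R_{g'}$ under control and $g'$ complete. In particular, $\mathring A_g$ is not obviously bounded on an arbitrary slice; I would first try to bound it via bounded geometry and the graph representation of $\{\xi=P\}$, which is compact.
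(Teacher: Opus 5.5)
The paper does not prove this statement; it imports it from \cite{XU11}, so there is no route to compare yours with. Your argument, however, has a gap at Step 3 that cannot be closed.

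Put $\Sigma=\{\xi=P\}$ and $g'=e^{2f}g$. The left side of Gauss's equation is intrinsic, and $\imath_P^*g'=e^{2f|_\Sigma}\,\imath_P^*g$, so it can only depend on $f|_\Sigma$. Concretely, push your Step 1 transformation laws through:
\begin{itemize}
\item the $\nabla^2 f(\nu,\nu)$ terms of $R_{g'}$ and $-2\text{Ric}_{g'}(\nu',\nu')$ cancel exactly, using $\Delta_g f=\Delta_\Sigma f+\nabla^2 f(\nu,\nu)+h_g\,\partial_\nu f$;
\item the remaining $\partial_\nu f$ terms cancel against those in $h_{g'}^2-\lvert A_{g'}\rvert^2$, using $A_{g'}=e^{f}(A_g+\partial_\nu f\,\imath_P^*g)$.
\end{itemize}
What is left is
\[
R_{\imath_P^*g'}=e^{-2f}\Bigl(R_{\imath_P^*g}-2(n-2)\Delta_\Sigma f-(n-2)(n-3)\lvert\nabla_\Sigma f\rvert^2\Bigr),
\]
which is just the conformal-change formula on the $(n-1)$-manifold $\Sigma$. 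Consequences:
\begin{itemize}
\item Making $u$ convex in $\xi$, or any other normal behaviour of $u$, contributes nothing.
\item Your comparison $\cos^2\theta>\sin^2\theta$ has nothing to act on, and Step 4 has no free parameter.
\item For $n\geqslant 4$ the conclusion is exactly equivalent to positivity of the first eigenvalue of $-\frac{4(n-2)}{n-3}\Delta_\Sigma+R_{\imath_P^*g}$, i.e.\ Yamabe-positivity of $(\Sigma,\imath_P^*g)$. Completeness of $g'$ is free: take $f$ compactly supported.
\item The obstacle you flag, boundedness of $\mathring A_g$, is not one, since $\Sigma$ is compact.
\end{itemize}

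The angle hypothesis cannot supply this Yamabe positivity either. Write $\operatorname{grad}_g\xi/g^{\xi\xi}=\partial_\xi+W$ along $\Sigma$, with $W$ tangent to $\Sigma\cong X$. Let $\Theta_s$ be the flow of $W$ on $X$, and set $\Psi(x,\xi)=(\Theta_{\beta(\xi-P)}(x),\xi)$, where $\beta$ is compactly supported and $\beta(s)=s$ near $0$. Then $\Psi$ fixes $\Sigma$ pointwise and $d\Psi(\partial_\xi)=\partial_\xi+W$ on $\Sigma$. Hence $\Psi^*g$ is isometric to $g$, induces the same metric on $\Sigma$, and has angle $0$ there. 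So the statement as quoted would force every slice of every complete, uniformly PSC, bounded-curvature metric on $X\times\R$ to be Yamabe-positive.

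That is false for $n\geqslant 4$. Take the round cylinder $\mathbb{S}^{n-1}\times\R$. A compactly supported isotopy carries $\mathbb{S}^{n-1}\times\{0\}$ onto a hypersurface containing a tiny rescaled copy of $\Gamma\times B_L^{n-3}\subset\R^3\times\R^{n-3}$, where $\Gamma$ is a compact piece of the helicoid. In conformal coordinates on $\Gamma$ one has $K_\Gamma\,dA=-\operatorname{sech}^2 v\,du\,dv$, and the product has scalar curvature $2K_\Gamma$. Test functions $\alpha(v)\gamma(u)\psi(y)$ with wide supports therefore make the quadratic form of the conformal Laplacian negative; this uses that a one-dimensional attractive well always binds. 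Negativity is scale-invariant and stable under the small rescaled perturbation of the ambient metric. Pull the product metric back by the time-one map of this isotopy and then by $\Psi$. The result satisfies every hypothesis, yet its slice has negative Yamabe constant.

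So no choice of $u$ can finish your argument. A correct statement needs hypotheses on the intrinsic conformal geometry of the slice.
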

\begin{remark}\label{intro:re1}
Since we can identify $ \{ \xi = P \}  = X \times \{P \} \cong X $, Theorem \ref{intro:thm2} gives a PSC metric $ \imath_{P}^{*} g' $ on $ X $. 

Suppose that there exists a global diffeomorphism $ F: X \times \R \rightarrow X \times \R $ whose global inverse is denoted by $ G $. Denote by $ \tilde{\xi} = \pi_{\R} \circ G $ as a new global coordinate along $ \R $-direction.  Then Theorem \ref{intro:thm2} gives a PSC metric on the hypersurface $ \{ \tilde{\xi} = 0 \} $, provided that (\ref{intro:eqn1}) is replaced by $ \angle_{g}(\nu_{g, \tilde{\xi}}, \partial_{\tilde{\xi}}) \in [0, \frac{\pi}{4}) $. Here $ \nu_{g, \tilde{\xi}} $ is the unit normal vector field along $ \{ \tilde{\xi} = 0 \} $. In general, we cannot identify $ \{ \tilde{\xi} = 0 \} $ with $ X $.
\end{remark}

Our major contribution is to show that
\begin{align}\label{intro:eqn2}
   & \chi(X) = 0  \\
   \Rightarrow & \exists F : X \times \R \xrightarrow{\cong} X \times \R, \tilde{\xi} : = \pi_{\R} \circ G, \; \rm{s.t.} \; \angle_{g}(\nu_{g, \tilde{\xi}}, \partial_{\tilde{\xi}}) \in [0, \frac{\pi}{4}) \; {\rm on} \; \{ \tilde{\xi} = 0 \} \nonumber,
\end{align}
i.e. $ \chi(X) = 0 $ implies that the angle condition (\ref{intro:eqn1}) holds with respect to $ \angle_{g}(\nu_{\tilde{g}}, \partial_{\tilde{\xi}}) $. We use that the vanishing of the Euler characteristic is equivalent to the existence of a nowhere vanishing vector field on $X$. This is in the same manner as in the fundamental work of Markus \cite{LM}, where he showed that $ \chi(X) = 0 $ implies the existence of the Lorentz metric.

Classically, the Rosenberg-Stolz conjectures were verified by imposing some topological conditions: either when $ X $ has low dimensions, see \cite{CL} for $ \dim X = 2 $, \cite{Chodosh} for $ \dim X = 3 $ and \cite{Rade} for $ \dim X = 5, 6 $; however there are counterexamples when $ \dim X = 4 $ due to Seiberg-Witten theory, see e.g.,\ \cite{JR}; or $ X $ is enlargeable \cite{GromovL}; or $ X $ is spin and satisfies some Rosenberg index conditions \cite{Zeidler}. In addition, there are also other topological restrictions when $ X $ is spin, see e.g.,\ \cite{CRZ}, \cite{CZ}, \cite{RosSto}, \cite{Zeidler2}, etc. The low dimensional scenarios are often solved by geometric approaches, from classical minimal hypersurfaces \cite{SY} to $ \mu $-bubble method developed by Gromov \cite{GROMOV}, \cite{GROMOV2}. The dimensional restriction is is forced by problems with the existence of smooth minimal hypersurfaces. We may expect a higher dimensional threshold, say $ \dim X \leqslant 10 $ by very recent work \cite{CMSW}.

Our topological condition (\ref{intro:eqn2}) is more general and dimensional free. In addition, (\ref{intro:eqn2}) builds the relation between a fundamental topological condition and a geometric condition in terms of Riemannian metrics that does not depend on curvature tensors or any derivatives of the Riemannian metrics.

This article is organized as follows: In \S2 we first convert our angle condition (\ref{intro:eqn1}) to an equivalent statement in terms of $ g_{\xi\xi} $ and $ g^{\xi \xi} $. Then we verify (\ref{intro:eqn2}) in Theorem \ref{Angle:thm1}, where the hypothesis $ \chi(X) = 0 $ is applied. (\ref{intro:eqn2}) is equivalent to (\ref{Angle:eqn8}) for later use. In \S3, we conclude that when $ \chi(X) = 0 $ and $ \dim X \geqslant 5 $, the Rosenberg-Stolz conjecture for $ X \times \R $ holds (Theorem \ref{RS:thm1}) under the hypotheses of Theorem 1.1(i). We then conclude that the $ \mathbb{S}^{1} $-stability conjecture holds (Corollary \ref{RS:cor1}) for all $ X $ with $ \dim X \geqslant 5 $, provided that $ \chi(X) = 0 $. Finally, under the same hypothesis on $ X $, we show that $ X \times \mathbb{T}^{n} $ admits a PSC metric if and only if $ X $ admits a PSC metric (Corollary \ref{RS:cor2}).
\medskip

{\bf{Acknowledgement}}: The author would like to thank Professor LeBrun for counterexamples in 4-manifolds with $ \chi(X) = 0 $. The author would also like to thank Professor Rosenberg for many discussions.
\medskip

\section{Reworking  of the Angle Condition and the Verification of (\ref{intro:eqn2})}
In this section, we show that the topological hypothesis $ \chi(X) = 0 $ connects with the geometric angle condition (\ref{intro:eqn1}) by showing that there exists a Riemannian metric $ g $ on $ X \times \R $ such that (\ref{intro:eqn2}) holds. This is used in \S3 to our results on the Rosenberg $ \mathbb{S}^{1} $-stability conjecture and Rosenberg-Stolz conjecture for $ X \times \R $.

Without loss of generality, we set $ P = 0 \in \R $. Geometrically, the angle condition (\ref{intro:eqn1}) says that on every tangent space $ T_{y}(X \times \R), y \in X \times \{ 0 \} $, the angle between $ \partial_{\xi} $ and $ \nu_{g} $ is not too large in terms of the $ g $-inner product. This is highly related to, but essentially not quite the same as expecting the metric $ g $ to be not ``far away" from the product type. 

We simplify this angle condition first. The angle condition (\ref{intro:eqn1}) is equivalent to
\begin{equation}\label{Angle:eqn1}
\cos^{-1} \left( \frac{g(\partial_{\xi}, \nu_{g})}{g(\partial_{\xi}, \partial_{\xi})^{\frac{1}{2}}} \right) \in [0, \frac{\pi}{4}) \Leftrightarrow \frac{g(\partial_{\xi}, \nu_{g})}{g(\partial_{\xi}, \partial_{\xi})^{\frac{1}{2}}} \in (\frac{\sqrt{2}}{2}, 1] \; {\rm along} \; X_{0}.
\end{equation}
Clearly $ X_{0} = \lbrace \xi = 0 \rbrace $. For any choice of local coordinates $ (x^{1}, \dotso, x^{n}, \xi) $ around some point $ y \in X_{0} $, the unit normal vector field is of the form
\begin{align*}
    \nu_{g} & = \frac{\text{grad}_{g}\xi}{\sqrt{g(\text{grad}_{g}\xi, \text{grad}_{g}\xi)}} = \frac{ \sum_{j= 1}^{n} g^{\xi j} \frac{\partial}{\partial x^{j}} + g^{\xi \xi} \frac{\partial}{\partial \xi}}{\sqrt{g\left(\sum_{j= 1}^{n} g^{\xi j} \frac{\partial}{\partial x^{j}} + g^{\xi \xi} \frac{\partial}{\partial \xi}, \sum_{i= 1}^{n} g^{\xi i} \frac{\partial}{\partial x^{i}}+ g^{\xi \xi} \frac{\partial}{\partial \xi}\right)}} \\
    & = \frac{1}{\sqrt{g^{\xi \xi}}} \left( \sum_{j= 1}^{n} g^{\xi j} \frac{\partial}{\partial x^{j}} + g^{\xi \xi} \frac{\partial}{\partial \xi} \right).
\end{align*}
It follows (\ref{Angle:eqn1}) can be represented by
\begin{align*}
   \frac{g(\partial_{\xi}, \nu_{g})}{g(\partial_{\xi}, \partial_{\xi})^{\frac{1}{2}}} & =  \frac{g\left(\partial_{\xi}, \frac{1}{\sqrt{g^{\xi \xi}}} \left( \sum_{j= 1}^{n} g^{\xi j} \frac{\partial}{\partial x^{j}} + g^{\xi \xi} \frac{\partial}{\partial \xi} \right)\right)}{g(\partial_{\xi}, \partial_{\xi})^{\frac{1}{2}}} = \frac{1}{\sqrt{g_{\xi \xi} g^{\xi \xi}}} \left( \sum_{j = 1}^{n} g_{\xi j}g^{\xi j} + g_{\xi \xi} g^{\xi \xi} \right) \\
   & = \frac{1}{\sqrt{g_{\xi \xi} g^{\xi \xi}}}  \delta_{\xi}^{\xi} = \frac{1}{\sqrt{g_{\xi \xi} g^{\xi \xi}}}.
\end{align*}
Note that $ g_{\xi\xi} g^{\xi \xi} = g(\partial_{\xi}, \partial_{\xi}) g^{-1}(d\xi, d\xi) $ that only depends on the global variable $ \xi $ and the coordinate free tensors $ g $ and $ g^{-1} $, and so is independent of the choice of local coordinates other than $ \xi $-direction. Hence the angle condition has three equivalent expressions as stated below:
\begin{equation}\label{Angle:eqn2}
\cos^{-1} \left( \frac{g(\partial_{\xi}, \nu_{g})}{g(\partial_{\xi}, \partial_{\xi})^{\frac{1}{2}}} \right) \in [0, \frac{\pi}{4}) \Leftrightarrow \frac{g(\partial_{\xi}, \nu_{g})}{g(\partial_{\xi}, \partial_{\xi})^{\frac{1}{2}}} \in (\frac{\sqrt{2}}{2}, 1] \Leftrightarrow g_{\xi \xi} g^{\xi \xi} \in [1, 2) \; {\rm along} \; X_{0}.
\end{equation}
We are now ready to show that the main technical result (\ref{intro:eqn2}) holds.
\begin{theorem}\label{Angle:thm1}
Let $ X $ be a closed, oriented manifold, $ \dim X \geqslant 2 $. Let $ g $ be a complete metric on $ X \times \R $ that is of bounded curvature. Assume also that the smallest eigenvalue of the symmetric $ 2 $-tensor $ g $ is uniformly bounded below by some positive constant $ \lambda > 0 $. If $ \chi(X) $ = 0, then there exists a global diffeomorphism
\begin{equation*}
F : X \times \R \rightarrow X \times \R
\end{equation*}
with inverse $ G : = F^{-1} $, such that with $ \tilde{\xi} : = \pi_{R} \circ G $,
\begin{equation}\label{Angle:eqn3}
 1 < g\left(\partial_{\tilde{\xi}}, \partial_{\tilde{\xi}}\right)g^{-1}\left(d\tilde{\xi}, d\tilde{\xi}\right) < 2 \; {\rm on} \; \{ \tilde{\xi} = 0 \}.
\end{equation}
\end{theorem}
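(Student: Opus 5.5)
The plan is to build $F$ as the flow of a vector field on $X \times \R$ that moves each slice $X \times \{s\}$ onto the slice $X \times \{s+t\}$. Then $\tilde{\xi}$ coincides with $\xi$, while $\partial_{\tilde{\xi}} = F_{*}\partial_{t}$ becomes a vector field we choose freely.

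The starting observation is the following. For any global coordinate $\tilde{\xi}$ with $\partial_{\tilde{\xi}}$ transverse to $\{\tilde{\xi}=0\}$, Cauchy--Schwarz together with $d\tilde{\xi}(\partial_{\tilde{\xi}})=1$ gives
\begin{equation*}
g(\partial_{\tilde{\xi}},\partial_{\tilde{\xi}})\, g^{-1}(d\tilde{\xi},d\tilde{\xi}) \geqslant 1,
\end{equation*}
with equality exactly when $\partial_{\tilde{\xi}}$ is parallel to $\text{grad}_{g}\tilde{\xi}$. So we want $\partial_{\tilde{\xi}}$ to be almost, but not exactly, $g$-normal to the level set.

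\textbf{Step 1 (normal flow).} Set
\begin{equation*}
V := \frac{\text{grad}_{g}\xi}{g^{-1}(d\xi,d\xi)}.
\end{equation*}
This is smooth, since $g^{-1}(d\xi,d\xi)=g^{\xi\xi}>0$. It satisfies $d\xi(V) = 1$.

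\textbf{Step 2 (tangential perturbation, where $\chi(X)=0$ enters).} Since $\chi(X)=0$, there is a nowhere vanishing vector field $W_{0}$ on $X$. Lift it to $W$ on $X\times\R$, tangent to the slices, so $d\xi(W)=0$. For $\varepsilon>0$ put $V_{\varepsilon} = V + \varepsilon W$. Then $d\xi(V_{\varepsilon}) = 1$.

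\textbf{Step 3 (the flow is a global diffeomorphism).} Let $\Phi^{\varepsilon}_{t}$ be the flow of $V_{\varepsilon}$. Because $d\xi(V_{\varepsilon})=1$, along any integral curve we have $\xi(\Phi_{t}(p)) = \xi(p)+t$. Hence on a finite time interval every trajectory stays in a set $X\times[a,b]$, which is compact because $X$ is closed, so the flow is complete. Define
\begin{equation*}
F(x,t) := \Phi^{\varepsilon}_{t}(x,0).
\end{equation*}
It is a smooth bijection onto $X\times\R$: it maps $X\times\{t\}$ diffeomorphically onto $X\times\{t\}$, with inverse given by flowing back to the zero slice and projecting. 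Hence $F$ is a diffeomorphism, and $\tilde{\xi} = \pi_{\R}\circ G = \xi$. Moreover $\partial_{\tilde{\xi}} = F_{*}\partial_{t} = V_{\varepsilon}$ and $\{\tilde{\xi}=0\}=X\times\{0\}$.

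\textbf{Step 4 (the estimate).} Since $W$ is tangent to the level sets, $g(V,W) = g^{-1}(d\xi, W^{\flat})/g^{\xi\xi} = d\xi(W)/g^{\xi\xi} = 0$. Also $g(V,V) = 1/g^{\xi\xi}$. Therefore
\begin{equation*}
g(\partial_{\tilde{\xi}},\partial_{\tilde{\xi}})\, g^{-1}(d\tilde{\xi},d\tilde{\xi}) = \left(\frac{1}{g^{\xi\xi}} + \varepsilon^{2} g(W,W)\right) g^{\xi\xi} = 1 + \varepsilon^{2} g^{\xi\xi}\, g(W,W).
\end{equation*}
On the compact set $X\times\{0\}$ the function $g^{\xi\xi}\, g(W,W)$ is continuous and strictly positive, because $W_{0}$ never vanishes. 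So it lies in some interval $[m,M]$ with $0<m\leqslant M<\infty$. Choosing $\varepsilon^{2} < 1/M$ gives the strict inequalities (\ref{Angle:eqn3}).

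\textbf{Main obstacle.} I expect Step 3 to be the delicate point: verifying that $F$ is really a global diffeomorphism of $X\times\R$. This rests on completeness of the flow, which I would get from $d\xi(V_{\varepsilon})=1$ and the compactness of $X$ as above. The hypotheses of bounded curvature and the eigenvalue bound $\lambda$ do not seem necessary for this argument. They can be used, if desired, to bound $g^{\xi\xi}$ uniformly and make the choice of $\varepsilon$ uniform in $P$.

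Finally, the strict lower bound $>1$ is what forces the use of $\chi(X)=0$. Without the perturbation $W$ we would get the value $1$ exactly, and any perturbation $W$ with a zero would produce the value $1$ at that zero.
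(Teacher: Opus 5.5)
Your proof is correct, and it takes a genuinely different route from the paper. The paper keeps the original $\partial_{\xi}$ and shears along the $g$-unit field $U$, which it obtains by projecting the lifted nonvanishing field $g$-orthogonally off $\partial_{\xi}$, and it sets $F(p)=\Phi_{\pi_{\R}(p)}(p)$. Since $U$ is not tangent to the slices, invertibility of $F$ requires monotonicity of $h_{q}(t)=t+\pi_{\R}(\Phi_{t}(q))$. The paper proves this from the eigenvalue bound, after rescaling so that $\sqrt{g^{\xi\xi}}\leqslant 1-\zeta$. It then computes the value at $Q\in\{\tilde{\xi}=0\}$ in special coordinates as $1+1/g_{\xi\xi}$, which is $<2$ because $g_{\xi\xi}>1$.

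You instead replace $\partial_{\xi}$ by the $g$-normal field $\text{grad}_{g}\xi/g^{\xi\xi}$ and tilt it by $\varepsilon W$. Because $d\xi(V_{\varepsilon})=1$, the flow carries slices to slices, so completeness, invertibility and $\tilde{\xi}=\xi$ are immediate. The value comes out invariantly as $1+\varepsilon^{2}g^{\xi\xi}\lVert W\rVert_{g}^{2}$, and $\varepsilon$ plays the role of the paper's rescaling. You need neither bounded curvature nor the eigenvalue bound.

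Passing to the normal field is what makes your argument work for arbitrary $g$. The paper's coordinates at $Q$ have $U(Q)=\partial_{1}$, last coordinate $\xi$ with the original $\partial_{\xi}$, and $g_{i\xi}(Q)=0$ for $i\neq 1$. Together these force $\partial_{\xi}\perp_{g}T_{Q}(X\times\{0\})$. In general, the paper's $F$ gives $(1+g_{\xi\xi})\,g^{\xi\xi}/(1+d\xi(U))^{2}$ on $\{\tilde{\xi}=0\}=X\times\{0\}$. For the flat metric $(dx^{1}+2\,d\xi)^{2}+(dx^{2})^{2}+4\,d\xi^{2}$ on $\mathbb{T}^{2}\times\R$ with $\bar{V}=\partial_{x^{2}}$, this equals $9/4$, and it equals $2+1/(4c)$ if $g$ is first rescaled by $c$. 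Your construction has no such restriction.

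Your closing remark is also right and can be sharpened. Taking $\varepsilon=0$ gives the non-strict range $[1,2)$ for every closed $X$, and that range is all the angle condition requires. The strict lower bound is in fact equivalent to $\chi(X)=0$: it forces the $g$-tangential part of $\partial_{\tilde{\xi}}$ along $\{\tilde{\xi}=0\}\cong X$ to vanish nowhere.
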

\begin{proof}
Let $ \partial_{\xi} \in \Gamma(T\R) \hookrightarrow \Gamma(T(X \times \R)) $ be the fixed vector field as above. By assumption the smallest eigenvalue of $ g $ is uniformly bounded below by $ \lambda > 0 $, it follows that there exist $ C_{1}, C_{2} > 0 $ such that both $ g_{\xi \xi} \geqslant C_{1} $ and $ g^{\xi \xi} \leqslant C_{2} $ hold uniformly on $ X \times \R $. Without loss of generality, we may thus assume that 
\begin{equation*}
    g_{\xi \xi} > 1, \sqrt{g^{\xi \xi}} \leqslant 1 - \zeta \; \text{uniformly on} \; X \times \R
\end{equation*} for some $ \zeta \in (0, 1) $ by a one-time scaling of the original metric $ g $ if necessary. 

Since $ \chi(X) = 0 $, there exists a nowhere vanishing global vector field $ \bar{V} \in \Gamma(TX) $. Recall that $ \pi_{X} : X \times \R \rightarrow X $ and $ \pi_{\R} : X \times \R \rightarrow \R $ are natural projections. We define
\begin{equation*}
    V \in \Gamma(T(X \times \R)), T_{p}(X \times \R) \ni V_{p} : = (\bar{V}_{p}, 0) \in T_{\pi_{X}(p)}X \oplus T_{\pi_{\xi}(p)}\R, \forall p \in X \times \R.
\end{equation*}
as a global vector field on $ X \times \R $. $ V $ is a complete, global, nowhere vanishing vector field whose integral curve starting at any $ p \in X \times \R $ is in the fiber $ \{ \xi = \pi_{\xi}(p) \} $ with infinitesimal generator $ \bar{V} $. Clearly, the global nowhere vanishing vector field  $ \partial_{\xi} \in \Gamma(T(X \times \R)) $ and $ V $ are linearly independent. Set
\begin{equation*}
U = \frac{V - g\left(\frac{\partial_{\xi}}{\lVert \partial_{\xi} \rVert_{g}}, V \right) \frac{\partial_{\xi}}{\lVert \partial_{\xi} \rVert_{g}}}{\left\lVert V - g\left(\frac{\partial_{\xi}}{\lVert \partial_{\xi} \rVert_{g}}, V \right) \frac{\partial_{\xi}}{\lVert \partial_{\xi} \rVert_{g}} \right\rVert_{g}}
\end{equation*}
It follows from linear independency of $ V $ and $ \partial_{\xi} $ that $ U \in \Gamma(T(X \times \R)) $ is a global nowhere vanishing unit vector field. In summary,
\begin{equation}\label{Angle:eqn4}
g(U, U) = 1, g(U, \partial_{\xi}) = 0.
\end{equation}
It follows from the positive definiteness of $ g $ that $ U \neq 0 $ everywhere on $ X \times \R $. We also claim that $ U $ and $ \partial_{\xi} $ are linearly independent. By Sylvester's criterion of positive definite matrix $ g $ on each $ T_{p}(X \times \R) $, we show that the $ 2 \times 2 $ principal minor of $ g $, say
\begin{equation*}
    G_{U, \partial_{\xi}} : = \begin{bmatrix} g(U, U) & g(U, \partial_{\xi}) \\ g(U, \partial_{\xi}) & g(\partial_{\xi}, \partial_{\xi}) \end{bmatrix}
\end{equation*}
is invertible. By (\ref{Angle:eqn4}), it is straightforward to check that
\begin{equation*}
\det(G_{U, \partial_{\xi}}) = g_{\xi\xi} > 0.
\end{equation*}
Since $ X $ is a closed manifold, and the projection of $ U $ onto $ X $ is proportional to $ V $, we conclude that $ U $ is again a complete vector field. 

We denote by $ \Phi : \R \times (X \times \R) \rightarrow X \times \R $ the global smooth flow of $ U $. 
We define a map
\begin{equation}\label{Angle:eqn5}
    F : X \times \R \rightarrow X \times \R, F(p) = \Phi_{\pi_{\R}(p)}(p), \forall p \in X \times \R
\end{equation}
We will prove that is a diffeomorphism, with inverse $G:X \times \R \rightarrow X \times \R$ of the form $G(q) = \Phi_{t_q}(q)$, such that for $q = F(p)$, 
\begin{equation}\label{Angle:eqn3b}
    G(F(p)) = p \Leftrightarrow \Phi_{t}(\Phi_{\pi_{\R}(p)}(p)) = p \Leftrightarrow t + \pi_{\R}(p) = 0 \Leftrightarrow \forall q \in X \times \R, \exists! t \; {\rm s.t.} \; t + \pi_{\R}(\Phi_{t}(q)) = 0.
\end{equation}
Equivalently, we need to show that $ t = t_{q} $ in (\ref{Angle:eqn3b}) has a unique solution for each $q$ and is a smooth function of $ q $. 

To verify this, we {\it{Claim}} that for each fixed $ q \in X \times \R $
\begin{equation*}
    h_{q}(t) : = t +  \pi_{\R}(\Phi_{t}(q))
\end{equation*}
is a monotonic function from $ \R $ to $ \R $, and that $ \lim_{t \rightarrow +\infty}h_{q}(t) = + \infty $, and $ \lim_{t \rightarrow -\infty}h_{q}(t) = - \infty $. 

The claim implies that $ h_{q}(t) = 0 $ for a unique $ t \in \R $, and the smoothness in $q$ will follow.

By definition, $ h_{q}(t) $ is smooth in both $ t $ and $ q $. Observe that
\begin{equation*}
    \frac{dh_{q}(t)}{dt} = 1 - \frac{g(\frac{\partial_{\xi}}{\lVert \partial_{\xi} \rVert_{g}}, V)}{\lVert \partial_{\xi} \rVert_{g}\left\lVert V - g\left(\frac{\partial_{\xi}}{\lVert \partial_{\xi} \rVert_{g}}, V \right) \frac{\partial_{\xi}}{\lVert \partial_{\xi} \rVert_{g}} \right\rVert_{g}},
\end{equation*}
the denominator is clearly nonzero.

To estimate the right hand side of this equation, we claim that for any global $ g $-unit vector field $ \Gamma(TX) \in W \Leftrightarrow (W, 0) \in \Gamma(TX \oplus T\R) $, 
\begin{equation}\label{Angle:eqn4a}
    \max_{W \in \Gamma(TX), \lVert W \rVert_{g} = 1} g^{2}\left(\frac{\partial_{\xi}}{\lVert \partial_{\xi} \rVert_{g}}, W\right) \leqslant 1 - \frac{1}{g_{\xi\xi}g^{\xi \xi}}.
\end{equation}
Denote by $ Y = \frac{\partial_{\xi}}{\lVert \partial_{\xi} \rVert_{g}} $, $ \lVert Y \rVert_{g} = 1 $. Note that $ T_{q}(X \times \R) = TX \oplus (TX)^{\perp} $ where $ (TX)^{\perp} $ is the $ g $-normal bundle. It follows that
\begin{equation*}
    Y = Y_{TX} + Y^{\perp}
\end{equation*}
uniquely. By $ g $-orthogonal decomposition, it is immediate that for any $ W \in \Gamma(TX) $,
\begin{equation*}
    g^{2}(Y, W) = g^{2}(Y_{TX}, W) \leqslant \lVert Y_{TX} \rVert_{g}^{2} = 1 - \lVert Y^{\perp} \rVert_{g}^{2} 
\end{equation*}
with equality if and only if $ W = \pm\frac{Y_{TX}}{\lVert Y_{TX} \rVert_{g}} $. By flowout theorem, we can choose local coordinates $ (x^{1}, \dotso, x^{n}, \xi) $ within some small chart around $ q $ such that $ T_{x}X = Span \{\partial_{1}, \dotso, \partial_{n}) $ for $ x $ near $ q $; in addition, the last coordinate is the original $ \xi $-coordinate with the original $ \partial_{\xi} $ coordinate vector field locally. In these coordinates, there exist smooth functions $a^i$ defined near $q$ with
\begin{equation*}
    Y^{\perp} = Y - Y_{TX} = \frac{\partial_{\xi}}{\sqrt{g_{\xi \xi}}} - \sum_{i = 1}^{n} a_{i} \partial_{i}.
\end{equation*}
Note that
\begin{equation}\label{Angle:eqn4b}
d\xi(Y^{\perp}) = g((d\xi)^{\sharp}, Y^{\perp}).
\end{equation}
We compute both sides of (\ref{Angle:eqn4b}). On one hand,
\begin{equation*}
  d\xi(Y^{\perp}) = d\xi\left(\frac{\partial_{\xi}}{\sqrt{g_{\xi \xi}}} - \sum_{i = 1}^{n} a_{i} \partial_{i} \right) = \frac{1}{\sqrt{g_{\xi \xi}}}.
\end{equation*}
On the other hand, locally $ (d\xi)^{\sharp} = \sum_{i = 1}^{n} g^{i\xi} \partial_{i} + g^{\xi \xi} \partial_{\xi} $, therefore
\begin{equation*}
    g((d\xi)^{\sharp}, Y^{\perp}) = g\left(\sum_{i = 1}^{n} g^{i\xi} \partial_{i} + g^{\xi \xi} \partial_{\xi}, Y^{\perp}\right) = g^{\xi \xi} g(\partial_{\xi}, Y^{\perp}).
\end{equation*}
By (\ref{Angle:eqn4b}), we get
\begin{equation}\label{Angle:eqn4c}
d\xi(Y^{\perp}) = g((d\xi)^{\sharp}, Y^{\perp}) \Leftrightarrow \frac{1}{\sqrt{g_{\xi \xi}}} = g^{\xi \xi} g(\partial_{\xi}, Y^{\perp}) \Leftrightarrow g(\partial_{\xi}, Y^{\perp}) = \frac{1}{\sqrt{g_{\xi \xi}} g^{\xi \xi}}.
\end{equation}
Finally by (\ref{Angle:eqn4c}), we have
\begin{align*}
    \lVert Y^{\perp} \rVert_{g}^{2} & = g(Y^{\perp}, Y^{\perp}) = g\left(\frac{\partial_{\xi}}{\sqrt{g_{\xi \xi}}} - \sum_{i = 1}^{n} a_{i} \partial_{i}, Y^{\perp} \right) = \frac{1}{\sqrt{g_{\xi \xi}}} g(\partial_{\xi}, Y^{\perp}) = \frac{1}{g_{\xi \xi}g^{\xi \xi}}.
\end{align*}
Hence (\ref{Angle:eqn4a}) follows. With (\ref{Angle:eqn4a}), we can compute that
\begin{align*}
    & \frac{\left\lvert g(\frac{\partial_{\xi}}{\lVert \partial_{\xi} \rVert_{g}}, V) \right\rvert}{\lVert \partial_{\xi} \rVert_{g}\left\lVert V - g\left(\frac{\partial_{\xi}}{\lVert \partial_{\xi} \rVert_{g}}, V \right) \frac{\partial_{\xi}}{\lVert \partial_{\xi} \rVert_{g}} \right\rVert_{g}} \\
    & \qquad \leqslant \frac{\sqrt{g(V, V)}}{\sqrt{g_{\xi \xi}} \sqrt{g(V, V) - \left( g\left(\frac{\partial_{\xi}}{\lVert \partial_{\xi} \rVert_{g}}, V \right)\right)^{2}}} = \frac{\sqrt{g(V, V)}}{\sqrt{g_{\xi \xi}} \sqrt{g(V, V) - g(V, V)\left( g\left(\frac{\partial_{\xi}}{\lVert \partial_{\xi} \rVert_{g}}, \frac{V}{\lVert V \rVert_{g}} \right)\right)^{2}}} \\
    & \qquad \leqslant \frac{1}{\sqrt{g_{\xi \xi} \cdot \left( 1 - \left(1 - \frac{1}{g_{\xi\xi}g^{\xi\xi}} \right) \right)}} = \sqrt{g^{\xi \xi}}.
\end{align*}
By our assumption $ \sqrt{g_{\xi \xi}} \leqslant 1 - \zeta < 1 $ uniformly on $ X \times \R $. It follows that
\begin{equation}\label{Angle:eqn4d}
\frac{dh_{q}(t)}{dt}= 1 - \frac{g(\frac{\partial_{\xi}}{\lVert \partial_{\xi} \rVert_{g}}, V)}{\lVert \partial_{\xi} \rVert_{g}\left\lVert V - g\left(\frac{\partial_{\xi}}{\lVert \partial_{\xi} \rVert_{g}}, V \right) \frac{\partial_{\xi}}{\lVert \partial_{\xi} \rVert_{g}} \right\rVert_{g}} \geqslant 1 - \sqrt{g^{\xi\xi}} \geqslant \zeta > 0.
\end{equation}
Thus $ h_{q}(t) $ is a monotone increasing function of $ t $ for every $ q \in X \times \R $. By (\ref{Angle:eqn4d}) and fundamental theorem of calculus, $\lim_{t\to \infty} h_q(t) = \infty, \lim_{t\to -\infty} h_q(t) = -\infty$. We therefore conclude that for each $ q \in X \times \R $, there exists a unique $ t \in \R $ such that (\ref{Angle:eqn3b}) holds. Since $ \frac{dh_{t}}{dt} $ is never zero, $ t $ depends smoothly on $ q $ by implicit function theorem. 

This finishes the proof of the {\it{Claim}}. It follows that $ F $ defined in (\ref{Angle:eqn3}) is a global diffeomorphism.
\medskip

Our verification of (\ref{Angle:eqn3}) becomes a pointwise argument by picking special local coordinates. Define the new global coordinate
\begin{equation*}
    \tilde{\xi} : = \pi_{\R} \circ G.
\end{equation*}
It follows that we have the following diagram:
\begin{equation}\label{Angle:diag}
    \begin{tikzcd}
        (\cdot, \xi) \in X \times \R \arrow[rr, "F", shift left= 0.5ex] \arrow[dr, "\pi_{\R}" '] &  & X \times \R \ni (\cdot, \tilde{\xi}) \arrow[ll, "G", shift right= -0.5ex] \arrow[dl, "\pi_{\R} \circ G"] \\ 
        & \R &
    \end{tikzcd}
\end{equation}
Fix an arbitrary point $ Q \in \{ \tilde{\xi} = 0 \} \subset X \times \R_{\xi} $. By flowout theorem, we can choose a local coordinates $ (x^{1}, \dotso, x^{n}, \xi) $ around a given point $ Q $, different from the local coordinates chosen above, such that just at the point $ Q $ centered as $ (0, \dotso, 0, 0) $, we have $ U(Q) = \partial_{1} $; also with this local coordinates, we have $ g_{ij}(Q) = g^{ij}(Q) = \delta_{i}^{j}, i, j \neq 1, \xi $, $ g_{i\xi}(Q) = 0, i \neq 1, \xi $. This choice of local coordinates can be chosen such that the last coordinate is the original global coordinate $ \xi $ with the original vector field $ \partial_{\xi} $ locally around $ Q $. By (\ref{Angle:eqn5}), the local expression of $ F $ in $ (x^{1}, \dotso, x^{n}, \xi) $ is of the form
\begin{equation}\label{Angle:eqn6}
F(x, \xi) = (x^{1} + \xi, x^{2}, \dotso, x^{n}, \xi) + O(\lvert (x, \xi) \rvert^{2}).
\end{equation}
Note that $ \lvert (x, \xi) \rvert \ll 1 $ in this local coordinate expression (\ref{Angle:eqn6}). It follows from (\ref{Angle:eqn6}) that
\begin{equation}\label{Angle:eqn7}
JF_{Q} = \begin{bmatrix} 1 & 0 & \dotso & 0 & 1 \\ 0 & 1 & \dotso & 0 & 0 \\ \vdots & \vdots & \ddots & \vdots & \vdots \\ 0 & 0 & \dotso & 1 & 0 \\ 0 & 0 & \dotso & 0 & 1 \end{bmatrix}, JF_{Q}^{-1} = \begin{bmatrix} 1 & 0 & \dotso & 0 & - 1 \\ 0 & 1 & \dotso & 0 & 0 \\ \vdots & \vdots & \ddots & \vdots & \vdots \\ 0 & 0 & \dotso & 1 & 0 \\ 0 & 0 & \dotso & 0 & 1 \end{bmatrix}.
\end{equation}
By (\ref{Angle:diag}), we see that $ G $ maps $ (\cdot, \tilde{\xi}) $ to $ (\cdot, \xi) $. It follows that $ g(U, \nabla (\pi_{\R} \circ G)) = g(U, \partial_{\xi}) = 0 $. Therefore, $ U \in \Gamma(\{ \tilde{\xi} = 0\}) $. By our choice of local coordinates and the same argument, we show that $ (x^{1}, \dotso, x^{n}) \in \{ \tilde{\xi} = 0 \} $. Analogously, when we choose another collection of local coordinates $ (y^{1}, \dotso, y^{n}, \xi) $ around $ Q' $, where $ Q' $ and $ Q $ are close enough such that two charts have nontrivial intersection, the transitions satisfy $ y^{i} = y^{i}(x) $ only, it follows that $ \partial_{\tilde{\xi}} $ is unchanged among different choices of local coordinates. This verifies our claim that the verification of (\ref{Angle:eqn3}) is a pointwise argument.

We now compute $ g(\partial_{\tilde{\xi}}, \partial_{\tilde{\xi}})(Q) g^{-1}(d\tilde{\xi}, d\tilde{\xi})(Q) $ at any point $ Q \in \{ \tilde{\xi} = 0 \}\subset X \times \R $ with the local coordinates chosen above. Just at $ Q $, we apply the local expression of F in (\ref{Angle:eqn6}) that
\begin{align*}
g(\partial_{\tilde{\xi}}, \partial_{\tilde{\xi}})(Q) g^{-1}(d\tilde{\xi}, d\tilde{\xi})(Q) & = g(-\partial_{x^{1}} + \partial_{\xi}, -\partial_{x^{1}} + \partial_{\xi}) g^{-1}(d\xi, d\xi) = (g_{11} - 2g_{1\xi} + g_{\xi \xi}) g^{\xi \xi} \\
& = (g_{11} - 2g_{1\xi} + g_{\xi \xi}) \cdot \frac{g_{11}}{g_{11}g_{\xi \xi} - g_{1 \xi}^{2}}.
\end{align*}
With $ U(Q) = \partial_{1} $, it follows from (\ref{Angle:eqn4}) that
\begin{equation*}
g(\partial_{\tilde{\xi}}, \partial_{\tilde{\xi}})(Q) g^{-1}(d\tilde{\xi}, d\tilde{\xi})(Q) = (g_{11} - 2g_{1\xi} + g_{\xi \xi}) \cdot \frac{g_{11}}{g_{11}g_{\xi \xi} - g_{1 \xi}^{2}} = \frac{1}{g_{\xi\xi}} + 1.
 \end{equation*}
Our calculation is consistent with the general fact that $ g(\partial_{\tilde{\xi}}, \partial_{\tilde{\xi}}) g^{-1}(d\tilde{\xi}, d\tilde{\xi}) \geqslant 1 $. Recall that $ g_{\xi \xi} > 1 $. It follows that (\ref{Angle:eqn3}) holds at $ Q $. Repeating the same argument at each point of $ \{ \tilde{\xi} = 0 \} $ with associated choices of local coordinates, we conclude that (\ref{Angle:eqn3}) holds on the hypersurface $ \{ \tilde{\xi} = 0 \} $.
\end{proof}
\begin{remark}\label{Angle:re1}
By the same argument in the derivation of (\ref{Angle:eqn2}), we have
\begin{equation}\label{Angle:eqn8}
\cos^{-1} \left( \frac{g(\partial_{\tilde{\xi}}, \nu_{g, \tilde{\xi}})}{g(\partial_{\tilde{\xi}}, \partial_{\tilde{\xi}})^{\frac{1}{2}}} \right) \in [0, \frac{\pi}{4}) \Leftrightarrow \frac{g(\partial_{\tilde{\xi}}, \nu_{g, \tilde{\xi}})}{g(\partial_{\tilde{\xi}}, \partial_{\tilde{\xi}})^{\frac{1}{2}}} \in (\frac{\sqrt{2}}{2}, 1] \Leftrightarrow g(\partial_{\tilde{\xi}}, \partial_{\tilde{\xi}}) g^{-1}(d\tilde{\xi}, d\tilde{\xi}) \in [1, 2) \; {\rm on} \; \{ \tilde{\xi} = 0 \}
\end{equation}
where $ \nu_{g, \tilde{\xi}} $ is the unit normal vector field along $ \{ \tilde{\xi} = 0 \} $.
\end{remark}
\medskip

\section{Generalization of The $ \mathbb{S}^{1} $-Stability Conjecture}
We have shown in \S2 that (\ref{Angle:eqn8}) holds. By Remark \ref{intro:re1}, Theorem \ref{intro:thm2} and a topological result \cite[Proposition 6.4]{Rade}, this geometric angle condition gives a partial answer of the Rosenberg-Stolz conjecture, and a complete answer of the $ \mathbb{S}^{1} $-stability conjecture when $ \dim X \geqslant 5 $, provided that $ \chi(X) = 0 $. We also generalize the $ \mathbb{S}^{1} $-stability conjecture to a $ \mathbb{T}^{n} $-stability theorem with $ \chi(X) = 0 $ and $ \dim X \geqslant 5 $. The main analysis tools are the key results of \cite{RX2} and \cite{XU11} with the geometric angle condition, the analysis of geometric elliptic PDEs and conformal geometry with the introduction of auxiliary spaces.
In particular, the results in this section hold for all odd dimensional, oriented, closed manifolds $ X $ with dimension at least $ 5 $.

Let $ a \in \R^{+} $ be some constant. We say that a closed embedded hypersurface $ \Sigma $ {\it{separates}} the two disjoint components $ \partial_{a, -} Y : = X \times \{ -a \} $ and $ \partial_{a, +} Y : = X \times \{ a \} $ of a compact cylinder $ Y = X \times [-a, a] $ if no connected component of $ Y \backslash \Sigma $ admits a path $ \gamma : [0, 1] \rightarrow Y $ such that $ \gamma(0) \in \partial_{a, -} Y $ and $ \gamma(1) \in \partial_{a, +} Y $ \cite[Definition 2.1]{Rade}. We will need the following topological result, which holds when $ \dim X \geqslant 5 $.
\begin{proposition}\cite[Propositon 6.4]{Rade}\label{RS:prop1}
Let $ X $ be a closed, connected, oriented manifold with $ \dim X \geqslant 5 $. Let $ Y = X \times [-a, a]_{\xi} $ for some $ a \in \R^{+} $. Let $ \Sigma $ be a closed, connected, oriented hypersurface separating $ \partial_{a, -} Y $ and $ \partial_{a, +} Y $. If $ \Sigma $ admits a PSC metric, then so does $ X $. 
\end{proposition}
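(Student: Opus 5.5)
The plan is to turn the separation hypothesis into a bordism between $\Sigma$ and $X$. We then improve that bordism by surgery in its interior until the Gromov--Lawson surgery theorem, in its bordism form, carries the PSC metric from $\Sigma$ to $X$. Throughout, $n=\dim X\geqslant 5$, so $\dim Y=n+1\geqslant 6$.

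\emph{Step 1 (a bordism).} Since $\Sigma$ is closed, connected and separates $\partial_{a,-}Y$ from $\partial_{a,+}Y$, we cut $Y$ along $\Sigma$ and let $W$ be the closure of the union of the components of $Y\setminus\Sigma$ that meet $\partial_{a,-}Y$. After a small generic isotopy of $\Sigma$ (or by working in the component adjacent to $\partial_{a,-}Y$), $W$ is a compact oriented $(n+1)$-manifold with $\partial W=\partial_{a,-}Y\sqcup\Sigma$. Thus $W$ is an oriented bordism from $X\cong X\times\{-a\}$ to $\Sigma$. Let $r=\pi_X|_W:W\to X$ and let $i:X\to W$ be the inclusion of the boundary component. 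Then $r\circ i=\mathrm{id}_X$, so $i_*$ is split injective on all homotopy groups and $\pi_k(W)\cong\pi_k(X)\oplus\ker r_*$. Moreover $TW=r^*TX\oplus\varepsilon^1$, because $W\subset X\times\R$.

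\emph{Step 2 (surgery in the interior).} The goal is to make $i$ $2$-connected, which is equivalent here to $\ker r_*=0$ on $\pi_1$ and $\pi_2$. We work in the interior of $W$ and do this in two stages.
\begin{enumerate}
\item[(a)] Choose finitely many embedded circles representing generators of $\ker(r_*:\pi_1W\to\pi_1X)$; they are finitely many because $\pi_1W$ is finitely generated and $\pi_1X$ is finitely presented. Do surgery on these circles.
\item[(b)] Then do surgery on embedded $2$-spheres representing $\mathbb{Z}[\pi_1]$-module generators of $\ker(r_*:\pi_2W\to\pi_2X)$. Finite generation holds because $W$ and $X$ are compact.
\end{enumerate}
These embeddings exist since $\dim W\geqslant 6$, by general position. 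The normal bundles are trivial: a class in $\ker r_*$ maps to a nullhomotopic map into $X$, so $TW=r^*TX\oplus\varepsilon$ restricts to a stably trivial bundle on the sphere, and the normal bundle is stably trivial in the stable range. Because the classes lie in $\ker r_*$, the map $r$ extends over each trace of surgery. Hence the modified bordism $W'$ still carries a retraction onto $X$, and $TW'$ is still pulled back from $X$ up to a trivial summand. These are surgeries strictly below the middle dimension of $W$, so they do not create new kernel in lower degrees. We conclude that $(W',X)$ is $2$-connected.

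\emph{Step 3 (transport of PSC).} Since $(W',X)$ is $2$-connected and $\dim W'\geqslant 6$, handle-trading (Smale/Wall) gives a handle decomposition of $W'$ relative to $X$ with handles of index $\geqslant 3$ only. Dually, relative to $\Sigma$ it has handles of index $\leqslant (n+1)-3=n-2$. Hence $X$ is obtained from $\Sigma$ by a finite sequence of surgeries of codimension $\geqslant 3$. The Gromov--Lawson/Schoen--Yau surgery theorem then carries the PSC metric on $\Sigma$ through each surgery, producing a PSC metric on $X$.

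The main obstacle is Step 2: making $i$ $2$-connected while keeping the retraction. This requires the finite-generation statement for $\ker r_*$ on $\pi_2$ as a $\mathbb{Z}[\pi_1]$-module, and the framing of the normal bundles of the $2$-spheres. I would handle the framing first, using the pulled-back tangent bundle observation from Step 1, and cite Wall's surgery-below-the-middle-dimension lemma for the rest.
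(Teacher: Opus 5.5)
The paper gives no proof of this statement; it is quoted from R\"ade. Your argument is the standard bordism-theorem proof of such a result, and it is correct once the framing point below is addressed. The region $W$ between $X\times\{-a\}$ and $\Sigma$ is a bordism over $X$ carrying the tangential structure of $X\times\R$. Surgery below the middle dimension makes $X\hookrightarrow W'$ $2$-connected, and handle trading followed by Gromov--Lawson carries PSC from $\Sigma$ to $X$. Your only variation is to use $X$ itself, with $TX$, as the structure space instead of the normal $2$-type of $X$.

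The dimension bookkeeping is right. Surgery on $2$-spheres and trading away $2$-handles both need exactly $\dim W=n+1\geqslant 6$. Handles of index $\leqslant n-2$ relative to $\Sigma$ give surgeries of codimension $\geqslant 3$.

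The step to state more carefully is Step 2(a). Extending $r$ over the trace does not by itself extend the stable isomorphism $TW\cong r^{*}TX\oplus\varepsilon^{1}$. For each circle you must choose the one of its two normal framings (they differ by the generator of $\pi_1(SO(n))\cong\mathbb{Z}/2$) for which this isomorphism extends over the attached $2$-handle. With the other framing, $w_2(W')$ can differ from $(r')^{*}w_2(X)$. A $2$-sphere in $\ker r'_*$ could then have non-trivial normal bundle, which would block Step 2(b). For the $2$-spheres no such choice arises, since $\pi_2(SO)=0$.

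A cosmetic point: on $\pi_1$ the splitting is a semidirect product $\ker r_*\rtimes\pi_1(X)$, not a direct sum. This does not affect the argument.
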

We first address the Rosenberg-Stolz conjecture that $X\times \R$ admits a complete PSC metric if and only if $X$ admits a PSC metric.
\begin{theorem}\label{RS:thm1}
Let $ X $ be a closed, oriented manifold, $ \dim X \geqslant 5 $. Let $ g $ be a complete metric on $ X \times \R $ that is of bounded curvature. Assume also that the smallest eigenvalue of $ g $ is uniformly bounded below by $ \lambda > 0 $. If $ \chi(X) = 0 $ and $ R_{g} \geqslant \kappa > 0 $ on $ X \times \R $ uniformly for some constant $ \kappa $, then $ X $ admits a PSC metric $ \bar{g} $.
\end{theorem}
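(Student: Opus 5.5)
The argument simply chains the results already assembled: Theorem \ref{Angle:thm1} (which uses only $\chi(X)=0$ and the metric hypotheses), Theorem \ref{intro:thm2} via Remark \ref{intro:re1}, and Proposition \ref{RS:prop1}. We may assume $X$ is connected; otherwise we treat each component separately, noting that each component $X_i$ still has $\chi(X_i)=0$ only if we are careful. So it is cleaner to work with a component $X_i$ and the open submanifold $X_i\times\R$, which inherits completeness, bounded curvature, the eigenvalue bound and $R_g\geqslant\kappa$, and on which $\chi(X_i)=0$ must be assumed componentwise. This caveat should be recorded.

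\textbf{Step 1.} Since $\chi(X)=0$ and $g$ satisfies the hypotheses of Theorem \ref{Angle:thm1}, possibly after a constant rescaling, we obtain a global diffeomorphism $F$ with inverse $G$ and a new global coordinate $\tilde\xi=\pi_{\R}\circ G$. On $\{\tilde\xi=0\}$ it satisfies $1<g(\partial_{\tilde\xi},\partial_{\tilde\xi})g^{-1}(d\tilde\xi,d\tilde\xi)<2$. By Remark \ref{Angle:re1} this is exactly the angle condition (\ref{Angle:eqn8}) along $\Sigma:=\{\tilde\xi=0\}=F(X\times\{0\})$. A constant rescaling of $g$ preserves uniform positivity of scalar curvature, completeness and bounded curvature.

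\textbf{Step 2.} Apply Theorem \ref{intro:thm2} in the form of Remark \ref{intro:re1}, with $\xi$ replaced by $\tilde\xi$. Pulling back by $F$, the metric $F^*g$ on $X\times\R$ is again complete, of bounded curvature and uniformly PSC, and its hypersurface $\{\xi=0\}$ satisfies the angle condition. This yields a complete $g'$ conformal to $g$ such that $\imath^*g'$ is a PSC metric on $\Sigma$. Note that $\Sigma$ is closed, connected and oriented, being diffeomorphic to $X$ via $F$.

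\textbf{Step 3.} If we could use $\Sigma\cong X$ directly we would be done. Remark \ref{intro:re1} warns against this identification in general, so we route through Proposition \ref{RS:prop1} instead. Because $\Sigma$ is compact, its $\xi$-coordinate is bounded, so $\Sigma\subset X\times(-a,a)$ for some $a$. We then show that $\Sigma$ separates $X\times\{-a\}$ from $X\times\{a\}$ in $Y=X\times[-a,a]$. The function $\tilde\xi$ is a global coordinate with $\Sigma$ as its zero level set. Since $\tilde\xi$ is continuous and $\Sigma\cap Y$ is exactly the set where it vanishes, any path from $X\times\{-a\}$ to $X\times\{a\}$ inside $Y\setminus\Sigma$ would force $\tilde\xi$ to keep a constant sign along the path. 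To rule this out, we compare the sign of $\tilde\xi$ with $\xi$ on the two boundary components, namely $\tilde\xi<0$ on $X\times\{-a\}$ and $\tilde\xi>0$ on $X\times\{a\}$. This comparison is the main point to verify. Here I would use the monotonicity of $h_q$ from the proof of Theorem \ref{Angle:thm1}: the relation $\tilde\xi(q)=-t_q$ (after the sign conventions in (\ref{Angle:eqn3b}) are sorted out), together with $h_q'\geqslant\zeta$ and the bound $|d\xi(U)|\leqslant\sqrt{g^{\xi\xi}}<1$, shows that $\tilde\xi$ and $\xi$ have the same sign once $|\xi|$ is large. Enlarging $a$ if necessary then gives the required signs on $X\times\{\pm a\}$.

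\textbf{Step 4.} Proposition \ref{RS:prop1} then applies, since $\dim X\geqslant5$ and $\Sigma$ is a closed, connected, oriented separating hypersurface carrying a PSC metric. It produces a PSC metric $\bar g$ on $X$.

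The expected obstacle is Step 3, the separation and sign-comparison argument. Keeping track of the orientation conventions for $F$ versus $G$ is fiddly, and the argument depends on $\sup|\xi|$ being finite on $\Sigma$.
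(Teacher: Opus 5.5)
Your reduction follows the paper's proof step for step: Theorem \ref{Angle:thm1}, then Theorem \ref{intro:thm2} in the form of Remark \ref{intro:re1}, then the separation argument and Proposition \ref{RS:prop1}. Your additions are sound:
\begin{itemize}
\item The componentwise caveat is genuinely needed. A nowhere-vanishing field exists only if every component has $\chi=0$, and Proposition \ref{RS:prop1} assumes $X$ connected.
\item Pulling back by $F$ is the right way to make Remark \ref{intro:re1} precise.
\item Your sign comparison is correct and even holds globally, since $h_q(0)=\xi(q)$, $h_q$ is increasing, and $\tilde\xi(q)=-t_q$.
\end{itemize}
Your hesitation in Step 3 is unfounded, as your own Step 2 shows. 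Since $\Phi_0=\mathrm{id}$, the map $F(p)=\Phi_{\pi_{\R}(p)}(p)$ fixes $X\times\{0\}$ pointwise, so $\Sigma=F(X\times\{0\})=X\times\{0\}$. Applying Theorem \ref{intro:thm2} to $F^*g$ therefore already gives a PSC metric on $X\times\{0\}=X$. The warning in Remark \ref{intro:re1} is moot here, because $\{\tilde\xi=0\}=F(X\times\{0\})\cong X$ for every diffeomorphism $F$. So the separation argument and Proposition \ref{RS:prop1} are superfluous, both in your write-up and in the paper's.

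That redundancy exposes a genuine problem you inherit from the paper. Without Proposition \ref{RS:prop1}, nothing in the chain uses $\dim X\geqslant 5$: Theorem \ref{Angle:thm1} is stated for $\dim X\geqslant 2$ and Theorem \ref{intro:thm2} for $\dim M\geqslant 3$.

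Nor is $\chi(X)=0$ really needed. For any $g$, let $W$ be $\partial_\xi$ plus the $\xi$-independent extension of the tangential field $\nu_g/d\xi(\nu_g)-\partial_\xi$ on $X\times\{0\}$. Then $W$ is complete, $d\xi(W)=1$, and $W$ is a positive multiple of $\nu_g$ along $X\times\{0\}$. With $\Psi$ its flow, $F(x,s):=\Psi_s(x,0)$ is a diffeomorphism with $\tilde\xi=\xi$ and $\partial_{\tilde\xi}=W$. Hence (\ref{Angle:eqn8}) holds on $X\times\{0\}$ with angle $0$.

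Consequently Theorem \ref{intro:thm2}, as quoted, would by itself put a PSC metric on every closed, oriented $X$ with $\dim X\geqslant 2$ for which $X\times\R$ carries a complete, bounded-curvature, uniformly PSC metric. That conclusion fails in dimension $4$. Lifting a PSC metric on $X\times\mathbb{S}^1$ for the Seiberg--Witten counterexamples the paper itself cites gives exactly such a metric. The acknowledgement even mentions counterexamples with $\chi(X)=0$.

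So your Step 2 invokes a result that cannot hold as stated. Your reduction is logically fine given the cited results, but, like the paper's proof, it does not establish the theorem.
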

\begin{proof}
By Theorem \ref{Angle:thm1} and (\ref{Angle:eqn8}), there exists a global diffeomorphism $ F : X \times \R \rightarrow X \times \R $ and $ \tilde{\xi} = \pi_{\R} \circ G $ such that
\begin{equation*}
    \angle_{g}(\nu_{\tilde{g}}, \partial_{\tilde{\xi}}) \in [0, \frac{\pi}{4}) \; {\rm on} \; \{ \tilde{\xi} = 0 \},
\end{equation*}
By Theorem \ref{intro:thm2} and Remark \ref{intro:re1}, there exists a metric $ \bar{g} $ conformal to the metric $ \tilde{g} $, such that $ R_{\imath_{0}^{*}\bar{g}} > 0 $ on $ {\tilde{\xi} = 0} $, where $ \imath_{0} : \{ \tilde{\xi} = 0 \} \rightarrow X \times \R $.

Recall that in the proof of Theorem \ref{Angle:thm1}, we know by (\ref{Angle:eqn3b})that for each $ q \in X \times \R $ there exists a unique $ t $ such that $ t(q) = -\pi_{\R}(\Phi_{t}(q)) = -\pi_{R}(p) $ where $ p = G(q) $, which follows that $ t = -\pi_{\R} \circ G = -\tilde{\xi} $. By the limiting behavior of $ h_{t}(q) $ we verified in Theorem \ref{Angle:thm1}, it follows immediately that $ \{ \tilde{\xi} = 0 \} \subset Y = X \times [-a, a]_{\xi} $ for some large enough $ a $. In addition, $ \{ \tilde{\xi} = 0 \} $ separates $ \partial Y_{a, -} $ and $ \partial Y_{a, +} $.

Since $ \{ \tilde{\xi} = 0 \} $ admits a PSC metric and $ \dim X \geqslant 5 $, we conclude by Proposition \ref{RS:prop1} that $ X $ admits a PSC metric.
\end{proof}

The above result immediately gives the positive answer of the $ \mathbb{S}^{1} $-stability conjecture with the same hypothesis on $ X $.
\begin{corollary}\label{RS:cor1}
Let $ X $ be a closed, oriented manifold, $ \dim X \geqslant 5 $, with $\chi(M) = 0$. 
Then  $ X \times \mathbb{S}^{1} $ admits a PSC metric if and only if $ X $ admits a PSC metric.
\end{corollary}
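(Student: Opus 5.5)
The plan is to reduce Corollary \ref{RS:cor1} to Theorem \ref{RS:thm1} by lifting to the infinite cyclic cover. The direction ``$X$ PSC $\Rightarrow$ $X\times\mathbb{S}^1$ PSC'' is immediate: if $g_X$ has $R_{g_X}>0$, the product metric $g_X + d\theta^2$ has scalar curvature $R_{g_X}>0$, since $\mathbb{S}^1$ is flat. (The hypothesis is presumably $\chi(X)=0$; the ``$\chi(M)$'' in the statement is a typo.)

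For the nontrivial direction, suppose $h$ is a PSC metric on $X\times\mathbb{S}^1$. I would set $g=\pi^*h$ on $X\times\R$, with $\pi$ the covering map $(x,\xi)\mapsto(x,e^{i\xi})$, and check the hypotheses of Theorem \ref{RS:thm1} one at a time. All of them come from compactness of $X\times\mathbb{S}^1$ together with the fact that $\pi$ is a local isometry whose deck group $\mathbb{Z}$ acts by isometries of $g$ and cocompactly.

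\emph{Uniform positivity.} Since $R_h$ is continuous and positive on a compact manifold, $R_h\geqslant\kappa:=\min R_h>0$. Hence $R_g=R_h\circ\pi\geqslant\kappa$.

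\emph{Completeness and bounded curvature.} A Riemannian cover of a compact manifold is complete. Sectional curvatures of $g$ are pulled back from those of $h$, so they are uniformly bounded. The injectivity radius of $g$ is bounded below by that of $h$, which is positive by compactness, since geodesics lift.

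\emph{Lower eigenvalue bound.} This is the step needing care, because ``smallest eigenvalue of $g$'' is coordinate-dependent. I would interpret it as the paper uses it in the proof of Theorem \ref{Angle:thm1}: with respect to a fixed background product metric $g_0=g_X+d\xi^2$ (pulled back from $X\times\mathbb{S}^1$), we want $g\geqslant\lambda g_0$. Both $h$ and $g_X+d\theta^2$ live on the compact $X\times\mathbb{S}^1$, so there is $\lambda>0$ with $h\geqslant\lambda(g_X+d\theta^2)$. Pulling back gives $g\geqslant\lambda g_0$ uniformly on $X\times\R$. In particular, $g_{\xi\xi}\geqslant C_1$ and $g^{\xi\xi}\leqslant C_2$ hold uniformly, which is exactly what the proof of Theorem \ref{Angle:thm1} uses. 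The corresponding quantities are $\mathbb{Z}$-periodic in $\xi$, so the bounds follow from their extrema over one fundamental domain.

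With these checks, Theorem \ref{RS:thm1} applies, since $\dim X\geqslant5$ and $\chi(X)=0$, and it yields a PSC metric on $X$.

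If $X$ is disconnected, I would apply the argument to each component. Each component $X_i$ still has a PSC metric upstairs via restriction of $h$ to $X_i\times\mathbb{S}^1$. For the Euler characteristic, if one only knows $\chi(X)=0$ in total, I would note that the proof of Theorem \ref{Angle:thm1} only needs a nowhere-vanishing vector field on each component. I expect the main obstacle to be this point, together with pinning down the eigenvalue hypothesis. For simplicity I would assume $X$ is connected, as in Proposition \ref{RS:prop1}.
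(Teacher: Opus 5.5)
Your proposal is correct and follows the same route as the paper. You lift the PSC metric to the cyclic cover $X\times\R$, check by compactness of $X\times\mathbb{S}^1$ and $\mathbb{Z}$-periodicity that the pullback is complete, of bounded curvature, uniformly PSC and has smallest eigenvalue bounded below, and apply Theorem \ref{RS:thm1}; your reading of the eigenvalue hypothesis as $g\geqslant\lambda g_0$ and your injectivity-radius remark just fill in what the paper calls ``immediate.''

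Your side remark on disconnected $X$ does not work as written, since $\chi(X)=0$ in total does not give a nowhere-vanishing vector field on each component. However, the paper tacitly assumes $X$ connected too (Proposition \ref{RS:prop1} requires it), so restricting to connected $X$ as you do matches the paper.
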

\begin{proof}
For the nontrivial direction, let $ \Pi : X \times \R \rightarrow X \times \mathbb{S}^{1} $ be the smooth covering map. Since $ X \times \mathbb{S}^{1} $ admits a PSC metric $ g $, it is immediate that $ \Pi^{*}g $ is a complete metric that is of bounded curvature, $ R_{\Pi^{*}g} $ is uniformly bounded below by some positive constant $ \kappa $, and such that the smallest eigenvalue of $ \Pi^{*}g $ is uniformly bounded below by some positive constant $ \lambda > 0 $. The rest of the proof follows from Theorem \ref{RS:thm1}.
\end{proof}
Note that $ X \times (\mathbb{S}^{1})^{n} $ is a special manifold that has zero characteristic, the following lemma shows that for any manifold $ X $, the compact manifolds $ X \times (\mathbb{S}^{1})^{k}, 1 \leqslant k \leqslant n $ admit PSC metrics if and only if $ X \times (\mathbb{S}^{1})^{n} $ admits a PSC metric. 
\begin{lemma}\label{RS:lemma1}
Let $ X $ be a closed, oriented manifold, $ \dim X \geqslant 4 $. For $n>1 $, $ X \times \mathbb{T}^{n} $ admits a PSC metric if and only if $ X \times \mathbb{T}^{n-1} $ admits a PSC metric.
\end{lemma}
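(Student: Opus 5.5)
The easy direction is the product construction. If $X \times \mathbb{T}^{n-1}$ carries a PSC metric $g_{0}$, then $g_{0} + d\theta^{2}$ on $X \times \mathbb{T}^{n} = (X \times \mathbb{T}^{n-1}) \times \mathbb{S}^{1}$ has scalar curvature $R_{g_{0}} > 0$, because the flat circle factor contributes nothing to the scalar curvature.

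For the nontrivial direction, I will not redo any analysis. Instead I plan to apply Corollary \ref{RS:cor1} to the auxiliary manifold $X' := X \times \mathbb{T}^{n-1}$, using the identification $X \times \mathbb{T}^{n} = X' \times \mathbb{S}^{1}$. This requires checking the hypotheses of Corollary \ref{RS:cor1} for $X'$ in turn:
\begin{enumerate}[(a)]
\item $X'$ is closed and oriented, since it is a product of closed oriented manifolds.
\item $\dim X' = \dim X + (n-1) \geqslant 4 + 1 = 5$, because $n > 1$. This is exactly where the hypothesis $\dim X \geqslant 4$ (rather than $5$) enters.
\item By multiplicativity of the Euler characteristic under products, $\chi(X') = \chi(X)\,\chi(\mathbb{T}^{n-1}) = \chi(X) \cdot 0 = 0$, since $n - 1 \geqslant 1$. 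This vanishing is the key point: it holds for an arbitrary $X$, with no assumption on $\chi(X)$, because the torus factor supplies it.
\end{enumerate}
With these verified, Corollary \ref{RS:cor1} says that $X' \times \mathbb{S}^{1}$ admits a PSC metric if and only if $X'$ does. That is precisely the statement that $X \times \mathbb{T}^{n}$ admits PSC if and only if $X \times \mathbb{T}^{n-1}$ does.

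The only step needing care is making sure the hypotheses of Theorem \ref{RS:thm1} are met when it is invoked through Corollary \ref{RS:cor1}. The relevant metric is the pullback of a PSC metric on the compact manifold $X' \times \mathbb{S}^{1}$ to $X' \times \R$. This pullback is complete and of bounded curvature, has scalar curvature uniformly bounded below by a positive constant, and has smallest eigenvalue uniformly bounded below, all by compactness of $X' \times \mathbb{S}^{1}$. This is exactly the situation treated in the proof of Corollary \ref{RS:cor1}, so no new obstacle arises.

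Connectedness of $X'$ may matter if Proposition \ref{RS:prop1} is applied inside that chain of results. If $X$ is connected, so is $X'$. Otherwise I would argue componentwise, since a PSC metric on a disjoint union is the same as a PSC metric on each component.

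Iterating the lemma from $n$ down to $1$ then reduces questions about $X \times \mathbb{T}^{n}$ to questions about $X \times \mathbb{S}^{1}$, as used in Theorem \ref{intro:thm3}.
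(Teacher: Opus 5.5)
Your proposal is correct and follows the paper's argument: the paper also writes $X\times\mathbb{T}^{n} = (X\times\mathbb{T}^{n-1})\times\mathbb{S}^{1}$, notes $\chi(X\times\mathbb{T}^{n-1}) = 0$, and invokes Corollary~\ref{RS:cor1}. You add the explicit dimension count $\dim X + n - 1 \geqslant 5$ (the actual reason the hypothesis is $\dim X \geqslant 4$), the product-metric direction, and the componentwise treatment of connectedness, all of which the paper leaves implicit; as with the paper's proof, yours is only as sound as Corollary~\ref{RS:cor1} itself.
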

\begin{proof}
Identify $ X \times \mathbb{T}^{n} = X \times (\mathbb{S}^{1})^{n} = X \times (\mathbb{S}^{1})^{n - 1} \times \mathbb{S}^{1} $. It is clear that $ \chi(X \times (\mathbb{S}^{1})^{n - 1}) = \chi(X) \cdot (\chi(\mathbb{S}^{1}))^{n - 1}  = 0 $. It follows from Corollary \ref{RS:cor1} that $ X \times (\mathbb{S}^{1})^{n - 1} $ admits a PSC metric.
\end{proof}
Lemma \ref{RS:lemma1} serves as a recursion result in terms of the power of $ \mathbb{S}^{1} $. This allows us to generalize the $ \mathbb{S}^{1} $-stability conjecture to a {\it{$ \mathbb{T}^{n} $-stability theorem}}, provided that $ \chi(X) = 0 $. 
\begin{corollary}\label{RS:cor2}
Let $ X $ be a closed, oriented manifold, $ \dim X \geqslant 5 $, with $ \chi(X) = 0 $. Then $ X \times \mathbb{T}^{n} $ admits a PSC metric if and only if $ X $ admits a PSC metric.
\end{corollary}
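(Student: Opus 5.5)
The proof will be an induction on $n$ that peels off one circle factor at a time, using Lemma \ref{RS:lemma1} and Corollary \ref{RS:cor1}. The direction ``$X$ PSC $\Rightarrow$ $X \times \mathbb{T}^{n}$ PSC'' needs no induction. If $\bar g$ is a PSC metric on $X$, take the product metric $\bar g \oplus g_{\rm flat}$ with a flat metric on $\mathbb{T}^{n}$. Its scalar curvature is $R_{\bar g} > 0$, because the flat factor contributes nothing.

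For the nontrivial direction I induct on $n \geqslant 1$. The base case $n = 1$ is exactly Corollary \ref{RS:cor1}, which applies since $X$ is closed, oriented, $\dim X \geqslant 5$ and $\chi(X) = 0$. For the inductive step, suppose $X \times \mathbb{T}^{n}$ admits a PSC metric with $n > 1$. Write
\begin{equation*}
X \times \mathbb{T}^{n} = (X \times \mathbb{T}^{n-1}) \times \mathbb{S}^{1},
\end{equation*}
and set $X' := X \times \mathbb{T}^{n-1}$. I need to check that $X'$ satisfies the hypotheses of Corollary \ref{RS:cor1}:
\begin{itemize}
\item $X'$ is closed and oriented, as a product of closed oriented manifolds;
\item $\dim X' = \dim X + n - 1 \geqslant 5$;
\item $\chi(X') = \chi(X)\chi(\mathbb{T}^{n-1}) = 0$ by multiplicativity of the Euler characteristic.
\end{itemize}
Corollary \ref{RS:cor1} then gives a PSC metric on $X'$; this is precisely Lemma \ref{RS:lemma1}. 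The induction hypothesis applied to $X \times \mathbb{T}^{n-1}$ then gives a PSC metric on $X$.

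I expect no real obstacle at this level: all the analytic content is in Corollary \ref{RS:cor1}, and through it in Theorem \ref{Angle:thm1} and Theorem \ref{RS:thm1}. The only points to watch are that the hypotheses of Corollary \ref{RS:cor1} hold at every stage of the recursion, which they do since $\chi = 0$ and the dimension only grows, and that connectedness, if Proposition \ref{RS:prop1} requires it, is inherited by $X'$ from $X$. One could instead apply Corollary \ref{RS:cor1} directly to $X'' = X \times \mathbb{T}^{k}$ for each $k = n-1, \dots, 0$ in descending order. That is the same argument written as a finite descent rather than an induction.
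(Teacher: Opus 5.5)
Your proposal is correct and follows the paper's argument: the paper also descends from $X\times\mathbb{T}^{n}$ to $X\times\mathbb{S}^{1}$ by repeated use of Lemma \ref{RS:lemma1}, which is just Corollary \ref{RS:cor1} applied to $X\times\mathbb{T}^{k}$ with $k\geqslant 1$, where the Euler characteristic vanishes automatically, and then uses $\chi(X)=0$ for a last application of Corollary \ref{RS:cor1}. Your induction is the same descent written recursively, and your product-metric argument for the easy direction fills in what the paper leaves implicit.
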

\begin{proof}
By Lemma \ref{RS:lemma1}, we know that $ X \times \mathbb{S}^{1} $ admits a PSC metric. Since $ \chi(X) = 0 $ and $ \dim X \geqslant 5 $, Corollary \ref{RS:cor1} applies, and we conclude that $ X $ admits a PSC metric.
\end{proof}

\bibliographystyle{plain}
\bibliography{ScalarPre}

\begin{thebibliography}{10}

\bibitem{Aubin}
T.~Aubin.
\newblock {\em Nonlinear Analysis on Manifolds. {M}onge-{A}mp\'ere
  {E}quations.}
\newblock Grundlehren der mathematischen Wissenschaften. Springer, Berlin,
  Heidelberg, New York, 1982.

\bibitem{CRZ}
S.~Cecchini, D.~R\"ade, and R.~Zeidler.
\newblock Nonnegative scalar curvature on manifolds with at least two ends.
\newblock {\em J. Topol.}, 16:855--876, 2023.

\bibitem{CZ}
S.~Cecchini and R.~Zeidler.
\newblock Scalar and mean curvature comparison via the {D}irac operator.
\newblock {\em Geometry and Topology}, 28:1167--1212, 2024.

\bibitem{Chodosh}
O.~Chodosh.
\newblock Stable minimal surfaces and positive scalar curvature.
  \url{https://web.stanford.edu/~ochodosh/Math258-min-surf.pdf}.

\bibitem{CL}
O.~Chodosh and C.~Li.
\newblock Generalized soap bubbles and the topology of manifolds with positive
  scalar curvature.
\newblock {\em Ann. of Math. (2)}, 199:707--740, 2024.

\bibitem{CMSW}
O.~Chodosh, C.~Mantoulidis, F.~Schulze, and Z.~Wang.
\newblock Generic regularity for minimizing hypersurfaces in dimension $ 11 $.
\newblock {\em ar{X}iv:2506.12052}.

\bibitem{GROMOV}
M.~Gromov.
\newblock Four lectures on scalar curvature.
\newblock {\em ar{X}iv:1908.10612v6}.

\bibitem{GROMOV2}
M.~Gromov.
\newblock Metric inequalities with scalar curvature.
\newblock {\em Geom. Funct. Anal.}, 28(3):645--726, 2018.

\bibitem{GromovL}
M.~Gromov and H.~Lawson.
\newblock Positive scalar curvature and the {D}irac operator on complete
  riemannian manifolds.
\newblock {\em Publ. Math. {I}H\'ES}, 58:295--408, 1983.

\bibitem{LM}
L.~Markus.
\newblock Line element fields and {L}orentz structures on differentiable
  manifolds.
\newblock {\em Ann. Math.}, 62(3):411 -- 417, 1955.

\bibitem{Rade}
D.~R\"ade.
\newblock Scalar and mean curvature comparison via $ \mu $-bubbles.
\newblock {\em Calc. Var. Partial Differential Equations}, 62(187), 2023.

\bibitem{JR}
J.~Rosenberg.
\newblock Manifolds of positive scalar curvature: a progress report.
\newblock {\em Surveys in Differential Geometry}, 11(1):259--294, 2006.

\bibitem{RosSto}
J.~Rosenberg and S.~Stolz.
\newblock Manifolds of positive scalar curvature.
\newblock {\em Algebraic topology and its applications. Vol. 27. Math. Sci.
  Res. Inst. Publ}, 27:241--267, 1994.

\bibitem{RX2}
S.~Rosenberg and J.~Xu.
\newblock A codimension two approach to the $ \mathbb{S}^{1} $-stability
  conjecture.
\newblock {\em ar{X}iv:2412.12479}.

\bibitem{SY}
R.~Schoen and S.-T. Yau.
\newblock Conformally flat manifolds, {K}leinian groups and scalar curvature.
\newblock {\em Invent. Math.}, 92:47--71, 1988.

\bibitem{XU11}
J.~Xu.
\newblock Existence of positive scalar curvature and positive yamabe constant
  on hypersurfaces of noncompact cylinders.
\newblock {\em ar{X}iv:2509.24016}.

\bibitem{Xu12}
J.~Xu.
\newblock On the {R}osenberg-{S}tolz conjecture for $ {X} \times \mathbb{R}^{2}
  $ and its application in complex geometry.
\newblock {\em ar{X}iv:2510.13588}.

\bibitem{XU10}
J.~Xu.
\newblock Scalar and mean curvature comparison on compact cylinder.
\newblock {\em ar{X}iv:2507.07005}.

\bibitem{Zeidler2}
R.~Zeidler.
\newblock An index obstruction to positive scalar curvature on fiber bundles
  over aspherical manifolds.
\newblock {\em Algebr. Geom. Topol.}, 17:3081--3094, 2017.

\bibitem{Zeidler}
R.~Zeidler.
\newblock Band width estimates via the {D}irac operator.
\newblock {\em J. Differential Geom.}, 122(1):155--183, 2022.

\end{thebibliography}
\end{document}